\documentclass[reqno]{article}
\usepackage{amsmath,amsfonts,amssymb,amsthm}

\renewcommand{\le}{\leqslant}
\renewcommand{\ge}{\geqslant}
\renewcommand{\rho}{\varrho}

\newtheorem{theorem}{Theorem}
\newtheorem{lemma}[theorem]{Lemma}

\theoremstyle{definition}

\newcommand\eps{\varepsilon}

\renewcommand\Pr{{\mathbb P}}
\newcommand\E{{\mathbb E}}

\newcommand\cc{{\mathrm{c}}}
\newcommand\op{o_{\mathrm{p}}}
\newcommand\Op{O_{\mathrm{p}}}

\newcommand\cA{\mathcal{A}}

\newcommand\cD{\mathcal{D}}
\newcommand\cS{\mathcal{S}}
\newcommand\cW{\mathcal{W}}
\newcommand\cT{\mathcal{T}}
\newcommand\bp[1]{{\cT_{#1}}}

\begin{document}
\title{A simple branching process approach to the phase transition in $G_{n,p}$}
\author{B\'ela Bollob\'as%
\thanks{Department of Pure Mathematics and Mathematical Statistics,
Wilberforce Road, Cambridge CB3 0WB, UK and
Department of Mathematical Sciences, University of Memphis, Memphis TN 38152, USA.
E-mail: {\tt b.bollobas@dpmms.cam.ac.uk}.}
\thanks{Research supported in part by NSF grant DMS-0906634,
ARO grant W911NF-06-1-0076, DARPA grant 9060-200241 CLIN 01,
and University of Memphis FedEx Institute of Technology grant UMF-20953}
\and Oliver Riordan%
\thanks{Mathematical Institute, University of Oxford, 24--29 St Giles', Oxford OX1 3LB, UK.
E-mail: {\tt riordan@maths.ox.ac.uk}.}}
\date{July 24, 2012}
\maketitle

\begin{abstract}
It is well known that the branching process approach to the study of the random graph
$G_{n,p}$ gives a very simple way of understanding the size of the giant component
when it is fairly large (of order $\Theta(n)$). Here we show that a variant of this approach
works all the way down to the phase transition: we use branching process
arguments to give a simple new derivation of the asymptotic size
of the largest component whenever $(np-1)^3n\to\infty$.
\end{abstract}

\section{Introduction}

Our aim in this note is to show how basic results about the survival probability
of branching processes can be used to give an essentially best possible result
about the emergence of the giant component in $G_{n,p}$, the random graph with
vertex set $[n]=\{1,2,\ldots,n\}$ in which each edge is present independently
with probability $p$.
In 1959, Erd\H os and R\'enyi~\cite{ER_evol} showed that if we take
$p=p(n)=c/n$ where $c$ is constant, then there is a `phase transition' at $c=1$.
We write $L_1(G)$ for the maximal number of vertices in a
component of a graph $G$. Also, as usual, we say that an event holds \emph{with high
probability} or \emph{whp} if its probability tends to $1$ as $n\to\infty$.
Erd\H os and R\'enyi showed that, whp, if $c<1$ then $L_1(G_{n,c/n})$ is of
logarithmic order, if $c=1$ it is of order $n^{2/3}$, while if $c>1$ then there
is a unique `giant' component containing $\Theta(n)$ vertices, while the second
largest component is much smaller.

In 1984, Bollob\'as~\cite{BB_evol} noticed that this is only the starting point,
and an interesting question remains: what does the component structure of
$G_{n,p}$ look like for
$p=(1+\eps)/n$, where $\eps=\eps(n)\to 0$? He and {\L}uczak~\cite{Luczak_near}
showed that if $\eps=O(n^{-1/3})$ then $G_{n,p}$ behaves in a qualitatively similar way
to $G_{n,1/n}$; this range of $p$ is now called the \emph{scaling window}
or \emph{critical window} of the
phase transition. The range $\eps^3n\to\infty$ is the \emph{supercritical}
regime, characterized by the fact that there is whp a unique `giant' component
that is much larger than the second largest component. The range $\eps^3n\to -\infty$
is the \emph{subcritical} regime.

In this paper we are interested in the size of the giant component
as it emerges. Thus we consider the (weakly) supercritical regime
where $p=p(n) = (1+\eps)/n$, with $\eps=\eps(n)$ satisfying
\begin{equation}\label{a1}
 \eps\to 0 \hbox{\qquad and \qquad} \eps^3n\to\infty\hbox{\qquad as }n\to\infty.
\end{equation}
Our aim here is to use branching processes
to give a very simple new proof of the following
result, originally due to Bollob\'as~\cite{BB_evol}
(with a mild extra assumption) and {\L}uczak~\cite{Luczak_near}.
\begin{theorem}\label{th1}
Under the assumption \eqref{a1} we have
\[
 L_1(G_{n,p}) = (2+\op(1))\eps n.
\]
\end{theorem}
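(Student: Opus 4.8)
The plan is to exploit the standard local coupling between the component exploration in $G_{n,p}$ and a Galton--Watson branching process, but to run the coupling \emph{carefully enough that it remains valid well beyond $\Theta(n)$-sized components}, down to components of size $\Theta(\eps n)$. Concretely, when we explore the component of a fixed vertex by breadth-first search, at each step a vertex is ``active'', ``explored'' or ``neutral''; the number of new neutral vertices that become active when we explore a vertex is $\mathrm{Bin}(N,p)$, where $N$ is the current number of neutral vertices. As long as we have touched only $o(n)$ vertices, $N = (1-o(1))n$, so this is very close to $\mathrm{Bin}(n,p)$, i.e.\ to the offspring distribution of a Galton--Watson process $\bp{\la}$ with $\la = np = 1+\eps$. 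I would therefore sandwich the exploration between two branching processes $\bp{\la^-}$ and $\bp{\la^+}$ with $\la^\pm = (1\pm o(1))(1+\eps)$, valid up to the first time the total explored set reaches size, say, $n^{2/3}\eps^{-2}$ or some similar threshold that is both $o(n)$ and $\gg \eps n$.

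The key analytic input is the behaviour of the survival probability $\rho(\la)$ of $\bp{\la}$ as $\la = 1+\eps \downto 1$: the classical fact that $\rho(1+\eps) = (2+o(1))\eps$, together with quantitative control on how fast a surviving branching process grows and how large a non-surviving one can get. The proof then splits into the two usual halves. \emph{Lower bound:} the expected number of vertices lying in ``large'' components (those whose exploration, when coupled with $\bp{\la^-}$, would survive past the threshold) is $(\rho(\la^-)+o(1))n = (2+o(1))\eps n$; and a second-moment / sprinkling argument shows these large components in fact almost all coalesce into a single component of the claimed size — alternatively one shows directly that two explored large components are whp joined by an edge. \emph{Upper bound:} couple downward with $\bp{\la^+}$; the probability that $\bp{\la^+}$ reaches the threshold but the true component is much larger than $2\eps n$ is controlled by the fact that a branching process with mean $1+\eps$ conditioned to be large has size concentrated, and by a union bound over the $O(\eps n)$ ``large'' vertices we get that no component exceeds $(2+o(1))\eps n$.

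I would organise the write-up as: (i) a lemma on $\bp{\la}$ recording $\rho(1+\eps)\sim 2\eps$ and a tail bound for the total progeny both on the survival and extinction events; (ii) a lemma making the two-sided coupling of the exploration with $\bp{\la^\pm}$ precise up to the chosen threshold, with the error in $\la^\pm$ made explicit in terms of the threshold and $n$; (iii) the lower-bound argument via first moment plus merging of giant fragments; (iv) the upper-bound argument via the conditioned-size concentration and a union bound.

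The main obstacle, I expect, is \textbf{choosing the exploration threshold and tracking the coupling error}: the window in which the exploration still looks like $\bp{1+\eps}$ has width only $o(n)$, and because $\eps^3 n\to\infty$ only barely, the relevant branching processes are themselves only barely supercritical, so the threshold must be simultaneously $\gg \eps n$ (so that ``large'' components really are giant fragments) and small enough that the depletion of neutral vertices changes the effective mean by $o(\eps)$ rather than $\Omega(\eps)$. Getting this balance, and the matching concentration of the total progeny of a near-critical surviving branching process around its conditional mean $\Theta(\eps n)$ (rather than merely $\Theta(1/\eps)$ generations), is where the real work lies; everything else is a routine application of first- and second-moment methods.
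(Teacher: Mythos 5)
Your high-level outline matches the paper's: couple the exploration with a binomial branching process, use $\rho(1+\eps)\sim 2\eps$ to get $\E[N_{[L,n]}]\sim 2\eps n$ for an appropriate cutoff $L$, prove concentration by a second moment, and merge the large fragments by sprinkling. However, you have misidentified where the actual difficulty sits, and the step you wave away as ``routine'' is precisely the one that requires an idea.

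First, the coupling itself is not the obstacle you describe. The paper does not use a two-sided $\bp{\la^\pm}$ sandwich with an error to track; it uses two one-sided \emph{domination} couplings (Lemma~\ref{couple}): $\cT_v\subset\bp{n,p}$ always, and $|C_v|\ge|\bp{n-k,p}|$ or both $\ge k$. With $k=L=o(\eps n)$ the change in the mean is automatically $o(\eps)$, so there is nothing delicate to balance there. Also, your phrase ``a branching process with mean $1+\eps$ conditioned to be large has size concentrated'' is not a true statement (a supercritical branching process conditioned to survive is infinite, and conditioned to be finite its tail is far from concentrated); the $\Theta(\eps n)$ scale comes from depletion in the graph, not from any branching-process concentration. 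The upper bound on $L_1$ in the paper is simply $L_1\le N_{[L,n]}$ whp, combined with the upper half of the concentration statement; you do not need, and cannot get, a direct bound on a single component from the branching process alone.

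Second, and more importantly, the second-moment estimate $\E[N_{[L,n]}^2]\le(4+o(1))\eps^2 n^2$ is not routine, and closing it is the paper's main contribution. Writing $\E[N^2]=n\,\E[1_{\{|C_v|\ge L\}}N]$ and exploring $C_v$ fully before turning to the rest of the graph leads to a term $\E\bigl[|C_v|\,1_{\{|C_v|\ge L\}}\bigr]$ whose evaluation is essentially circular (it presupposes the concentration you are trying to prove), plus an awkward intermediate regime $|C_v|\approx\eps n$ where the depleted graph is near-critical and the tail bound degrades. The paper's trick is to \emph{truncate} the exploration of $C_v$ when it has found $L$ vertices \emph{or} has $\eps L$ unexplored boundary vertices. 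This caps the ``direct'' contribution at $L=o(\eps n)$, and — crucially — the second cap guarantees the exploration boundary is only $O(\eps L)$ vertices, so a second exploration from $w$ fails to be independent of the first only with probability $O(\eps L|C_w'|/n)=o(\eps)$. That is what makes $\E[N\mid\cA]\le L+(2+o(1))\eps n\sim 2\eps n$, hence $\E[N^2]\le(4+o(1))\eps^2n^2$. Without something like this (or reverting to the messier enumeration of tree components, which the paper explicitly mentions and avoids), your ``routine second-moment argument'' does not close. So there is a genuine gap: you need either the boundary-truncation device or an alternative concentration mechanism, and your proposal supplies neither.

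The sprinkling step you sketch is correct and matches the paper: split $p=p_0+p_1-p_0p_1$ with $p_1=n^{-4/3}$, apply the concentration result to $G_{n,p_0}$, and show the large components of $G_{n,p_0}$ merge after sprinkling. That part is fine.
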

Here $\op(1)$ denotes a quantity that tends to 0 in probability:
the statement is that for any fixed $\delta>0$, $L_1(G_{n,p})$
is in the range $(2\pm\delta)\eps n$ with probability tending
to $1$ as $n\to\infty$.

Since the original papers~\cite{BB_evol,Luczak_near} (which in fact gave
a more precise bound than that above), many different
proofs of many forms of Theorem~\ref{th1} have been given. For example, Nachmias
and Peres~\cite{NP_giant} used martingale methods to reprove the result as stated
here. Pittel and Wormald~\cite{PWio} used counting methods to prove an even more
precise result; a simpler martingale proof of (part of) their result is given
in~\cite{BR_walk}.  A proof of Theorem~\ref{th1} combining tree counting and
branching process arguments appears in~\cite{rg_bp}. More recently, Krivelevich
and Sudakov~\cite{KS} gave a very simple proof of a variant of Theorem~\ref{th1}
which is even weaker than the original Erd\H{o}s--R\'enyi result:
$\eps>0$ is taken to be constant, and the size of the giant component is determined
only up to a constant factor.

\section{Branching process preliminaries}

Let us start by recalling some basic concepts and results.
The \emph{Galton--Watson} branching process with offspring
distribution $Z$ is the random rooted tree constructed as follows:
start with a single root vertex in generation 0. Each vertex in
generation $t$ has a random number of children in generation $t+1$,
with distribution $Z$. The numbers of children are independent of each
other and of the history. It is well known and easy to check that
if $\E[Z]>1$, then the process \emph{survives} (is infinite) with
probability $\rho$ the unique solution in $(0,1]$ to $1-\rho=f_Z(1-\rho)$,
where $f_Z$ is the probability generating function of $Z$.
When $\E[Z]<1$, the expectation of the total
number of vertices in the branching process is
\begin{equation}\label{totsize}
 1+\E[Z]+\E[Z]^2+\cdots= \frac{1}{1-\E[Z]},
\end{equation}
and in particular the survival probability is $0$.

Let us write $\bp{n,p}$
for the {\em binomial branching process} with parameters $n$ and $p$,
i.e., for the branching process as above with offspring distribution ${\rm Bi}(n,p)$.
Since the generating function of ${\rm Bi}(n,p)$ satisfies
\[
 f(x)=\sum_{k=0}^n \binom{n}{k} p^k (1-p)^{n-k} x^k=\big(1-p(1-x)\big)^n,
\]
when $np>1$ the survival probability
$\rho=\rho_{n,p}$ satisfies
\[
 1-\rho=(1-p \rho)^n.
\]
From this it is easy to check that if $\eps=np-1 \to 0$ with $\eps >0$ then
\begin{equation}\label{2eps}
\rho \sim 2\eps.
\end{equation}

Conditioning on a suitable branching process dying out (i.e., having finite total size)
one obtains another branching process, called the \emph{dual branching process}.
In the binomial case, one way to see
this is to think of $\bp{n,p}$ as a random subgraph of the infinite $n$-ary rooted tree $\bp{n,1}$
obtained
by including each edge independently with probability $p$, and retaining only the component
of the root. For a vertex of $\bp{n,1}$
in generation 1 there are three possibilities: it may (i) be \emph{absent}, i.e.,
not joined to the root, (ii) \emph{survive}, i.e., be joined to the root
and have infinitely many descendents, or (iii) \emph{die out}. The probabilities
of these events are $1-p$, $p\rho$ and $p(1-\rho)$, respectively.
Let $\cD$ denote the event that the process $\bp{n,p}$ dies out, i.e., the total population
is finite. Since $\cD$ happens if and only if every vertex of $\bp{n,1}$ in generation 1 is absent
or dies out, the conditional distribution of $\bp{n,p}$ given $\cD$
is the unconditional distribution of $\bp{n,\pi}$, with $\pi= p(1-\rho) / (1-p\rho)$.
Thus the dual of $\bp{n,p}$ is $\bp{n,\pi}$.

Note that when $np-1=\eps\to 0$, then
\[
 1-n\pi = \frac{1-p\rho-np+np\rho}{1-p\rho} \sim np\rho - (np-1) -p\rho \sim \eps.
\]
Hence the mean number of offspring in the dual process $\bp{n,\pi}$ is $1-(1+o(1))\eps$,
and from \eqref{totsize} its expected size total is $(1+o(1))\eps^{-1}$.
Writing $\cS=\cD^\cc$ for the event that $\bp{n,p}$ \emph{survives} (is infinite),
and $|\bp{n,p}|$ for its total size (number of vertices),
it follows that for any integer $L=L(n)$ we have
\begin{eqnarray}
 \Pr(|\bp{n,p}|\ge L) &=& \Pr(\cS) + \Pr(\cD)\Pr(|\bp{n,\pi}|\ge L) \nonumber \\
 &\le& \Pr(\cS) + \Pr(|\bp{n,\pi}|\ge L) \nonumber \\
 &\le& (1+o(1))(2\eps + 1/(\eps L)), \label{Tup}
\end{eqnarray}
with the second inequality following from Markov's inequality.

We shall use one further property of $\bp{n,p}$, which can be proved in a number
of simple ways. Suppose, as above, that $\eps=np-1\to 0$, and let $M=M(n)$ satisfy
$\eps M\to\infty$. Let $w(\cT)$ denote the \emph{width} of a rooted tree $\cT$, i.e.,
the maximum (supremum) of the sizes of the generations. Then
\begin{equation}\label{unwide}
 \Pr\bigl( \{w(\bp{n,p})\ge M \}\cap \cD\bigr) = o(\eps).
\end{equation}
To see this, consider testing whether the event $\cW_M=\{w(\bp{n,p})\ge M\}$
holds by constructing $\bp{n,p}$
generation by generation, stopping at the first (if any) of size at least $M$.
If such a generation exists then (since the descendents of each vertex in this generation
form independent copies of $\bp{n,p}$), the conditional probability
that the process dies out is at most $(1-\rho)^M\le e^{-\rho M}\to 0$. Hence
\begin{equation}\label{wdie}
 \Pr(\cD\mid \cW_M)=o(1).
\end{equation}
Thus
\[
 \Pr(\cW_M) \sim \Pr(\cS\cap \cW_M) \le \Pr(\cS) \sim 2\eps,
\]
which with \eqref{wdie} gives \eqref{unwide}.

\section{Application to $G_{n,p}$}

The binomial branching process is intimately connected to the component exploration
process in $G_{n,p}$. Given a vertex $v$ of $G_{n,p}$, let $C_v$ denote the component of
$G_{n,p}$ containing $v$, and let $\cT_v$ be the random tree obtained by
exploring this component by breadth-first search. In other words, starting with $v$, find
all its neighbours, $v_1, \dots , v_{\ell}$, say, next find all the neighbours of $v_1$
different from the vertices found so far, then the new neighbours of $v_2$, and so on,
ending the second stage with the new neighbours of $v_{\ell}$. The third stage consists of
finding all the new neighbours of the vertices found in the second stage, and so
on. Eventually we build a tree $\cT_v$, which is a spanning tree of $C_v$.

Note that our notation suppresses the fact that the distributions of $\cT_v$ and of $C_v$ 
depend on $n$ and $p$.
In the next lemma, as usual, $|H|$ denotes the total number of vertices in a graph $H$.

\begin{lemma}\label{couple}
{\rm (i)} For any $n$ and $p$, the random rooted trees $\cT_v$ and
$\bp{n,p}$ may be coupled so that $\cT_v\subset \bp{n,p}$.

{\rm (ii)} For any $n$, $k$ and $p$ there is a coupling of the integer-valued
random variables $|C_v|$ and $|\bp{n-k,p}|$ so that either
$|C_v|\ge |\bp{n-k,p}|$ or both are at least $k$.
\end{lemma}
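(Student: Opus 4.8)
The plan is to base both parts on a single mechanism: explore $C_v$ by breadth-first search and view this as a branching-process-type construction, then compare offspring distributions vertex by vertex. Set the exploration up so that, when a vertex $u$ is reached and it is its turn to be explored, we test the edges from $u$ \emph{only} to vertices not yet revealed. If $r_u$ denotes the number of vertices of $\cT_v$ present at that moment (so $u$ itself is counted, and $1\le r_u\le n$), the new neighbours of $u$ number $\Bi(n-r_u,p)$; moreover each edge of $G_{n,p}$ is tested at most once in this process — the first time one of its endpoints is explored while the other is still unrevealed — so, conditionally on the tree built so far, the numbers of children of the explored vertices are independent, and $\cT_v$ is a spanning tree of $C_v$ by the usual fact that breadth-first search reaches every vertex of a component.

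For part (i) one has $n-r_u\le n-1\le n$, so the $\Bi(n-r_u,p)$ offspring count is stochastically dominated by $\Bi(n,p)$. Build $\cT_v$ and $\bp{n,p}$ together in breadth-first order: at each vertex of $\bp{n,p}$ use $n$ independent $\Bi(1,p)$ variables to choose its children, and at a vertex $u$ lying in $\cT_v$ use $n-r_u$ of those same variables (for the corresponding vertex of $\bp{n,p}$) to choose the children of $u$ in $\cT_v$. Then at every vertex of $\cT_v$ the children in $\cT_v$ form a subset of the children in $\bp{n,p}$, and by induction on the breadth-first order $\cT_v\subset\bp{n,p}$.

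For part (ii), observe first that the two alternatives in the statement together assert precisely that $|C_v|\ge\min(|\bp{n-k,p}|,k)$, so it is enough to couple $\cT_v$ and $\bp{n-k,p}$ so that \emph{either} $\bp{n-k,p}$ embeds into $\cT_v$ as a rooted subtree \emph{or} $|\cT_v|\ge k$. Run the exploration of $C_v$; as long as at most $k$ vertices have been revealed — so that the current vertex $u$ has $n-r_u\ge n-k$ potential new neighbours — use $n-k$ of the independent $\Bi(1,p)$ variables that choose $u$'s new neighbours also to choose the children of $u$ in $\bp{n-k,p}$, and run the two constructions independently once more than $k$ vertices have been revealed. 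This builds $\bp{n-k,p}$ as a genuine $\Bi(n-k,p)$-branching process — each of its vertices gets a $\Bi(n-k,p)$ set of children from fresh independent coins — with the feature that, while the coordination lasts, every vertex's $\bp{n-k,p}$-children lie among its $\cT_v$-children. If $|C_v|<k$ then the coordination lasts to the end and every vertex of $\bp{n-k,p}$ lies in the finite tree $\cT_v$, so $\bp{n-k,p}\subset\cT_v$; and if $|C_v|\ge k$ the second alternative holds outright.

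None of these steps is hard — the proofs are meant to be elementary — and the observation reducing (ii) to the subtree-or-large dichotomy is the main idea. The two points that repay care are: in the common setup, that testing each edge at most once (only against currently-unrevealed vertices) really does produce a spanning tree of $C_v$ together with conditionally independent $\Bi(n-r_u,p)$ offspring counts; and in part (ii), that cutting the coordination off once $k$ vertices have been revealed still leaves $\bp{n-k,p}$ with exactly its intended branching-process law.
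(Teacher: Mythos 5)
Your argument is correct and in essence the same as the paper's. Part (i) is exactly the paper's ``fictitious vertex'' construction, rephrased in terms of shared $\Bi(1,p)$ coins. For part (ii) the paper instead runs a \emph{modified} exploration $\cT_v'\subset C_v$ that tests exactly $n-k$ potential neighbours at each step while at most $k$ vertices have been reached, so that $\cT_v'$ and $\bp{n-k,p}$ can be coupled \emph{identically} during that phase and ``both are at least $k$'' falls out symmetrically; you keep the full BFS tree $\cT_v$, embed $\bp{n-k,p}$ as a rooted subtree by sharing $n-k$ of the $n-r_u$ coins, and compensate via the (correct and tidy) observation that the conclusion is exactly $|C_v|\ge\min(|\bp{n-k,p}|,k)$. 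The two bookkeeping choices realize the same underlying coupling idea, and either serves the lemma equally well.
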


\begin{proof}
For the first statement we simply generate $\cT_v$ and $\bp{n,p}$
together, always adding fictitious vertices to the vertex set of $G_{n,p}$
for the branching process to take from, so that in each step a vertex
has $n$ potential new neighbours (some fictitious) each of which
it is joined to with probability $p$.
All the descendants of the fictitious vertices are themselves fictitious.

To prove (ii) we slightly modify the exploration, to couple
a tree $\cT_v'$ contained within $C_v$ with $\bp{n-k,p}$ such that one of two
alternatives holds: either $\cT_v' \supset \bp{n-k,p}$, or else both
$\cT_v'$ and $\bp{n-k,p}$ have at least $k$ vertices. Indeed, construct
$\cT_v'$ exactly as $\cT_v$, except that at each step at the start
of which we have not yet reached more than $k$ vertices,
we test for edges from the current vertex to exactly $n-k$ potential new
neighbours. Since $|C_v|\ge |\cT_v'|$, this coupling gives the result.
\end{proof}

From now on we take $p=p(n)=(1+\eps)/n$, where $\eps=\eps(n)$ satisfies \eqref{a1}.
We start by using the two couplings described above to give bounds on the expected number of vertices
in large components. In both lemmas, $N_{[L,n]}$ denotes
the number of vertices of $G_{n,p}$ in components with between $L$ and $n$ vertices (inclusive);
$\Pr_{n,p}$ and $\E_{n,p}$ denote the probability measure and expectation associated
to $G_{n,p}$.
\begin{lemma}\label{elb}
Suppose that $L=L(n)=o(\eps n)$.
Then
$\Pr_{n,p}(|C_v|\ge L)\ge (2+o(1))\eps.$
Equivalently, $\E_{n,p} (N_{[L,n]})\ge (2+o(1))\eps n$.
\end{lemma}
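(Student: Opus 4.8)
The plan is to use the coupling from Lemma~\ref{couple}(ii) to bound $|C_v|$ from below by the size of a slightly smaller binomial branching process, and then to show that this branching process is large (at least $L$) with probability asymptotically $\rho \sim 2\eps$. Concretely, fix a threshold $k = k(n)$ with $L \ll k \ll \eps n$; for definiteness one can take $k$ to be, say, $\sqrt{L \eps n}$, or more robustly any $k$ with $k/L \to \infty$, $k = o(\eps n)$, and $\eps^3(n-k) \to \infty$ (the last holding automatically since $k = o(n)$). By Lemma~\ref{couple}(ii) there is a coupling under which either $|C_v| \ge |\bp{n-k,p}|$ or both $|C_v|$ and $|\bp{n-k,p}|$ are at least $k \ge L$. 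In either case, if $|\bp{n-k,p}| \ge L$ then $|C_v| \ge L$. Hence
\[
 \Pr_{n,p}(|C_v| \ge L) \ge \Pr\bigl(|\bp{n-k,p}| \ge L\bigr).
\]

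Now I must lower-bound the right-hand side. Write $p = (1+\eps)/n$, so the mean offspring of $\bp{n-k,p}$ is $(n-k)p = (1+\eps)(1 - k/n) = 1 + \eps - k/n + o(\cdot)$; since $k = o(\eps n)$ this equals $1 + \eps'$ with $\eps' = \eps(1+o(1)) \to 0$ and still $(\eps')^3(n-k)\to\infty$. So all the branching-process facts from Section~2 apply with $\eps$ replaced by $\eps'$. In particular the survival probability of $\bp{n-k,p}$ is $\rho_{n-k,p} \sim 2\eps' \sim 2\eps$ by~\eqref{2eps}. Since $\{|\bp{n-k,p}| \ge L\} \supseteq \cS$, where $\cS$ is the survival event for this process, we immediately get $\Pr(|\bp{n-k,p}| \ge L) \ge \rho_{n-k,p} \sim 2\eps$, which is exactly the bound $(2+o(1))\eps$ claimed. (One does not even need the finite threshold $L$ here beyond $L \le k$; survival alone suffices once we know any infinite tree has more than $L$ vertices, which is trivially true.) The equivalence with $\E_{n,p}(N_{[L,n]}) \ge (2+o(1))\eps n$ is then just linearity of expectation: $\E_{n,p}(N_{[L,n]}) = \sum_{v} \Pr_{n,p}(|C_v| \ge L) = n\,\Pr_{n,p}(|C_v| \ge L)$ by symmetry over vertices.

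The only genuinely delicate point is the bookkeeping in the first paragraph: one must choose $k$ in the valid range $L \ll k \ll \eps n$, which is non-empty precisely because $L = o(\eps n)$ is assumed, and then check that shrinking $n$ to $n-k$ perturbs $\eps$ by a factor $1+o(1)$ and keeps us in the supercritical regime~\eqref{a1}. None of this is hard, but it is the place where the hypothesis $L = o(\eps n)$ is actually used — without it there would be no room to insert the threshold $k$ between $L$ and $\eps n$, and the coupling would not deliver $|C_v| \ge L$. I expect the proof to be three or four lines once these choices are fixed; the main "obstacle", such as it is, is simply recognising that survival of the dual-free branching process already gives the right constant $2$, so that no second-moment or concentration argument is needed for this direction. (The matching upper bound, controlling $N_{[L,n]}$ from above, is where the real work — using \eqref{Tup} and \eqref{unwide} — will lie, but that is a separate lemma.)
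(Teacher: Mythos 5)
Your proof is correct and uses the same two ingredients as the paper's: the coupling of Lemma~\ref{couple}(ii) followed by lower-bounding $\Pr(|\bp{n-k,p}|\ge L)$ by the survival probability of $\bp{n-k,p}$, which is $\sim 2\eps$ by \eqref{2eps} once $k=o(\eps n)$. The only difference is cosmetic: the paper simply takes $k=L$ (which already satisfies $L=o(\eps n)$), so your extra intermediate threshold $k$ with $L\ll k\ll\eps n$ is an unnecessary complication rather than a different route.
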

\begin{proof}
Taking $k=L$ in Lemma~\ref{couple}(ii),
\begin{eqnarray*}
\Pr_{n,p} (|C_v|\ge L) &\ge& \Pr(|\bp{n-L,p}|\ge L)\\
                 &\ge& \Pr (\bp{n-L,p} \ \text{survives})\sim 2\big( (n-L)p-1\big) \sim 2 \eps,
\end{eqnarray*}
where the approximation steps follow from \eqref{2eps} and the assumption on $L$.
\end{proof}

\begin{lemma}\label{eub}
Suppose that $L=L(n)$ satisfies $\eps^2L\to\infty$.
Then
$\E_{n,p} (N_{[L,n]})\le (2+o(1))\eps n$.
\end{lemma}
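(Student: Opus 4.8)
The plan is to reduce the statement to the tail bound \eqref{Tup} for the binomial branching process, using the upper coupling of Lemma~\ref{couple}(i). First I would rewrite the left-hand side. Since every component of $G_{n,p}$ has at most $n$ vertices, a vertex $v$ is counted by $N_{[L,n]}$ precisely when $|C_v|\ge L$, so by the symmetry of $G_{n,p}$ under relabelling vertices,
\[
 \E_{n,p}(N_{[L,n]}) = \sum_{v\in[n]}\Pr_{n,p}(|C_v|\ge L) = n\,\Pr_{n,p}(|C_v|\ge L).
\]
Thus it suffices to prove $\Pr_{n,p}(|C_v|\ge L)\le (2+o(1))\eps$.

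Next I would invoke the domination $\cT_v\subset\bp{n,p}$ from Lemma~\ref{couple}(i). Since $\cT_v$ is a spanning tree of $C_v$, under this coupling $|C_v|=|\cT_v|\le|\bp{n,p}|$, so
\[
 \Pr_{n,p}(|C_v|\ge L)\le \Pr\bigl(|\bp{n,p}|\ge L\bigr).
\]
Finally I would apply \eqref{Tup}, which with $np-1=\eps\to 0$ gives $\Pr(|\bp{n,p}|\ge L)\le (1+o(1))\bigl(2\eps + 1/(\eps L)\bigr)$. The hypothesis $\eps^2 L\to\infty$ says exactly that $1/(\eps L) = \bigl(1/(\eps^2 L)\bigr)\eps = o(\eps)$, so the bound is $(2+o(1))\eps$; multiplying back by $n$ completes the proof.

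I do not expect a genuine obstacle here: all the substantive work is already contained in \eqref{Tup} (derived via the dual branching process $\bp{n,\pi}$ together with Markov's inequality) and in the coupling of Lemma~\ref{couple}(i), and this lemma is simply the assembly of those ingredients. The role of the hypothesis $\eps^2 L\to\infty$ is precisely to make the Markov contribution $1/(\eps L)$ — the term coming from components explored through the subcritical dual process, whose expected size is $(1+o(1))\eps^{-1}$ — negligible against the survival term $2\eps$. The one point meriting a word of care is that Lemma~\ref{couple}(i) dominates $\cT_v$, and hence $|C_v|$, by $|\bp{n,p}|$ even though the branching process is allowed to draw on fictitious vertices; this is exactly why the inequality goes in the direction we need, and why no lower bound on $L$ other than $\eps^2 L\to\infty$ is required.
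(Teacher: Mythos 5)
Your proof is correct and is essentially identical to the paper's: apply the coupling $\cT_v\subset\bp{n,p}$ from Lemma~\ref{couple}(i), the tail bound \eqref{Tup}, and the hypothesis $\eps^2L\to\infty$ to conclude $\Pr_{n,p}(|C_v|\ge L)\le(2+o(1))\eps$, then multiply by $n$. The only difference is that you spell out the reduction $\E_{n,p}(N_{[L,n]})=n\Pr_{n,p}(|C_v|\ge L)$ and the estimate $1/(\eps L)=o(\eps)$, which the paper leaves implicit.
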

\begin{proof}
By Lemma~\ref{couple}(i) and \eqref{Tup},
\[
 \Pr_{n,p}(|C_v|\ge L) \le \Pr(|\bp{n,p}|\ge L) \le (1+o(1))(2\eps +1/(\eps L))\sim 2\eps.
\]
\end{proof}

Together these lemmas show that the expected number of vertices in components of size at
least $n^{2/3}$, say, is asymptotically $2\eps n$. Two tasks remain: to establish
concentration, and to show that most vertices in large components are in a single
giant component. For the first task, one can simply count tree components.
(This is a little messy, but theoretically trivial.
The difficulties in
the original papers~\cite{BB_evol,Luczak_near} stemmed from the fact that non-tree
components had to be counted as well. What is surprising is that here it suffices to count
tree components.) Indeed, applying the first
and second moment methods to the number $N$ of vertices in tree components
of size at most $n^{2/3}/\omega$, where $\omega=\omega(n)\to\infty$ sufficiently
slowly, shows that this number is within $\op(\epsilon n)$
of $(1-\rho)n$, reproving Lemma~\ref{eub} and (together with Lemma~\ref{elb})
giving the required concentration.
See~\cite{rg_bp} for a version of this argument with a (best possible) $\Op(\sqrt{n/\eps})$
error term.
Since the calculations, though requiring no ideas, are
somewhat lengthy, we take a different approach here.

\begin{lemma}\label{llarge}
Suppose that $L=L(n)$ satisfies $\eps^2L\to\infty$ and $L=o(\eps n)$. Then
\[
 N_{[L,n]}(G_{n,p}) = (2+\op(1))\eps n.
\]
\end{lemma}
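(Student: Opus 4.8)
The plan is to apply the second moment method to $N:=N_{[L,n]}(G_{n,p})=\sum_v\mathbf 1_{|C_v|\ge L}$. Since $\eps^2L\to\infty$ and $L=o(\eps n)$, Lemmas~\ref{elb} and~\ref{eub} give $\mu:=\E_{n,p}N=(2+o(1))\eps n$, so it is enough to prove $\E_{n,p}[N^2]\le(1+o(1))\mu^2$; Chebyshev's inequality then yields $N=(2+\op(1))\eps n$.

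To bound $\E[N^2]=\sum_{u,v}\Pr(|C_u|\ge L,|C_v|\ge L)$ I would drop the diagonal, which contributes $\mu=o(\mu^2)$, and for $u\ne v$ reveal $C_u$ by breadth-first search. Conditioned on $C_u$ with $|C_u|=s$, the rest of $G_{n,p}$ is a copy of $G_{n-s,p}$ on the complement of $C_u$; writing $g(s):=\E_{n-s,p}[N_{[L,n-s]}]$, the pairs with $v\in C_u$ contribute $n\,\E[|C_u|\mathbf 1_{|C_u|\ge L}]$ (here $v\in C_u$ forces $|C_v|=|C_u|\ge L$) and those with $v\notin C_u$ contribute $n\,\E[g(|C_u|)\mathbf 1_{|C_u|\ge L}]$, so $\E[N^2]=n\,\E\bigl[(|C_u|+g(|C_u|))\mathbf 1_{|C_u|\ge L}\bigr]+o(\mu^2)$. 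Two bounds on $g$ are available. First, for every $m\le n$, monotonicity of branching processes and \eqref{Tup} give $g(m)\le m\,\Pr(|\bp{m,(1+\eps)/m}|\ge L)\le(2+o(1))\eps n$. Second---and this is the source of the correct constant---the remaining graph $G_{n-s,p}$ has $(n-s)p=1+\eps-(1+\eps)s/n$, which drops below $1$ once $s\gtrsim\eps n$: so once the revealed component is as large as the giant ``should'' be, $G_{n-s,p}$ is subcritical and \eqref{totsize} applied to a dominating binomial branching process gives $g(s)=O\!\bigl(n/(\eps L)\bigr)=o(\eps n)$. Hence on the event that $|C_u|$ is \emph{typical}, say $|C_u|\in[(2-\delta)\eps n,(2+\delta)\eps n]$, we have $|C_u|+g(|C_u|)\le(2+\delta)\eps n+o(\eps n)$, and summing over $u$ the contribution of such $u$ is at most $(2+o(1))\eps n\cdot\E_{n,p}N_{[L,n]}=(1+o(1))\mu^2$ after letting $\delta\to0$ slowly.

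The real work, and the step I expect to be the main obstacle, is to show that the \emph{atypical} $u$ contribute only $o(\mu^2)$: that is, to show that almost all of the $\sim2\eps n$ vertices in components of size $\ge L$ in fact lie in components of size $(2+o(1))\eps n$, so that components of intermediate size (roughly between $\eps n/\omega$ and $2\eps n$) and components much larger than $2\eps n$ are both negligible. At the lower end this is easy: since $\E_{n,p}N_{[\ell,n]}=(2+o(1))\eps n$ for \emph{every} admissible threshold $\ell$, the expected number of vertices in components of size in $[L,\eps n/\omega)$ is $o(\eps n)$ for $\omega\to\infty$ slowly enough. Controlling the range from $\eps n/\omega$ up to about $2\eps n$, and especially the upper tail $\Pr(|C_v|\ge(2+\delta)\eps n)$, is the delicate part; here I would use the width estimate \eqref{unwide}, which says a barely subcritical (dual) branching process cannot be wide and so cannot hide a large component. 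Concretely, reveal $C_u$ generation by generation: by the time it has passed $\gg\eps^{-2}$ vertices the remaining graph has become (barely) subcritical, and \eqref{unwide} should prevent the exploration from running far beyond its saturation point $\sim2\eps n$, giving $\Pr(|C_v|\ge(2+\delta)\eps n)=o(\eps)$ for each fixed $\delta>0$ with enough uniformity to make the resulting contribution to $\E[N^2]$ equal to $o(\eps^2n^2)$. Putting the pieces together gives $\E[N^2]\le(1+o(1))\mu^2$, completing the proof.
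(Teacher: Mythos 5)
Your overall strategy---second-moment method, conditioning on the exploration of a first component---is the same as the paper's, but your implementation differs in a way that is exactly the source of the gap you yourself flag. You condition on the \emph{full} component $C_u$, reducing matters to $\E\bigl[(|C_u|+g(|C_u|))\mathbf 1_{|C_u|\ge L}\bigr]$ with $g(s)=\E_{n-s,p}[N_{[L,n-s]}]$. This forces you to control $g(s)$ as $s$ sweeps through the crossover at $s\approx\eps n$, where $G_{n-s,p}$ passes from weakly supercritical to weakly subcritical. Your two bounds on $g$ are fine at the two ends, but both rely on $((n-s)p-1)^2L\to\infty$, and this fails precisely when $|C_u|$ lies in a window around $\eps n$; there the only available estimate is the crude $g(s)\le(2+o(1))\eps n$, which is not enough. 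The typical/atypical split is a reasonable attempt to route around this, but the atypical contribution---in particular a bound of the form $\Pr\bigl(|C_v|\ge(2+\delta)\eps n\bigr)=o(\eps)$, and the analogous control on the window near $\eps n$---is a statement of comparable depth to the lemma itself; the sketch ``reveal generation by generation and invoke \eqref{unwide}'' does not establish it, and as written this is a genuine gap.

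The paper's proof avoids all of this by conditioning on much less. Rather than revealing the whole of $C_v$, it reveals a truncated breadth-first tree $\cT_v'$, stopping as soon as either (i) $L$ vertices have been reached or (ii) there are $\eps L$ reached-but-unexplored ``boundary'' vertices, and works with the event $\cA$ that the exploration was stopped for one of these reasons (which contains $\{|C_v|\ge L\}$). The crucial points are: $|\cT_v'|\le L=o(\eps n)$, so conditioning on $\cA$ removes only $o(\eps n)$ vertices and the residual graph is still supercritical with essentially the same $\eps$; hence $\Pr(|C_w|\ge L\mid\cA)\le(2+o(1))\eps$ uniformly for $w\notin V(C_v')$, with no case analysis on $|C_v|$. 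Condition (ii) caps the number of boundary vertices at $2\eps L$, which is exactly what is needed to control cross-edges from the second exploration back into the revealed region, and \eqref{unwide} together with \eqref{Tup} is used only to show $\Pr(\cA)\le(2+o(1))\eps$. One then gets $\E[N^2]\le n\Pr(\cA)\,\E[N\mid\cA]\le(2+o(1))\eps n\cdot(2+o(1))\eps n=(1+o(1))\mu^2$ directly, with no typical/atypical dichotomy at all. You should adopt this truncation device: the piece you identify as the ``delicate part'' is precisely what it is designed to eliminate.
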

\begin{proof}
Let $N$ be the number of vertices of $G_{n,p}$ in components
of size at least $L$. From Lemmas~\ref{elb} and~\ref{eub} the expectation $\E[N]$ of $N$
satisfies $\E[N]\sim 2\eps n$,
so it suffices to show that
\begin{equation}\label{aim}
 \E[N^2] \le (4+o(1))\eps^2n^2.
\end{equation}

Fix a vertex $v$ of $G_{n,p}$.
Let us reveal a tree $\cT_v'$ spanning a subset $C_v'$ of $C_v$ by exploring using
breadth-first search as before,
except that we stop the exploration if at any point (i) we have reached $L$ vertices
in total, or (ii) there are $\eps L$ vertices that have been reached (found as a new
neighbour of an earlier vertex) but not yet explored (tested for new neighbours).
Note that condition (ii) may happen partway through revealing a generation of $\cT_v'$,
and indeed partway through revealing the new neighbours of a vertex. We call a vertex
reached but not (fully) explored a \emph{boundary vertex}, and note
that there are at most $\eps L+1\le 2\eps L$ boundary vertices. Let $\cA$ be the event
that we stop for reasons (i) or (ii), rather than because we have revealed
the whole component:
\[
 \cA=\{\hbox{ the exploration stops due to (i) or (ii) holding }\}.
\]
Note that if $|C_v|\ge L$, then $\cA$ holds.

As before, we may couple $\cT_v'$ with $\bp{n,p}$ so that $\cT_v'\subset \bp{n,p}$.
Since the boundary vertices correspond to a set of vertices of $\bp{n,p}$ contained
in two consecutive generations, if $\cA$ holds, then either $|\bp{n,p}|\ge L$
or $w(\bp{n,p})\ge \eps L/2$. From \eqref{Tup} and \eqref{unwide} it follows that
$\Pr(\cA)\le (2+o(1))\eps$.

Since all vertices are equivalent and $|C_v|\ge L$ implies that $\cA$ holds, we have
\begin{equation}\label{eN2}
 \E[N^2] =n\E[1_{|C_v|\ge L}N] \le n \E[1_{\cA} N] = n\Pr(\cA) \E[N\mid \cA] \le (2+o(1))\eps n \E[N\mid \cA].
\end{equation}
Suppose that $\cA$ does hold. Given any vertex $w\notin C_v'$,
we explore from $w$ as usual, but within $G'=G_{n,p}\setminus V(C_v')$, coupling the resulting
tree $\cT_w'$ with $\bp{n,p}$ so that $\cT_w'\subset \bp{n,p}$. Let $C_w'$ be the component
of $w$ in $G'$, so $C_w'$ is spanned by $\cT_w'$.
Let $\cS$ be the event that (this final copy of) $\bp{n,p}$ is infinite, and let $\cD=\cS^\cc$.
Note that $C_w'\subset C_w$, and that the two are equal unless there is an edge from $C_w'$
to some boundary vertex. Since there are at most $2\eps L$ boundary vertices,
this last event has conditional probability at most $2\eps L |C_w'| p \le 3\eps L|C_w'|/n$, say.
Since $|C_w'|\le |\bp{n,p}|$, it follows that
\begin{eqnarray*}
 \Pr(|C_w|\ge L\mid \cA) &\le& \Pr(\cS) + \Pr(\cD)\Pr(|C_w'|\ge L \mid \cD) +3\Pr(\cD)\eps L n^{-1} \E[|C_w'| \mid \cD] \\
  &\le& \Pr(\cS) + \Pr(|\bp{n,p}|\ge L \mid \cD) +3\eps L n^{-1} \E[|\bp{n,p}| \mid \cD] \\
  &\le& \Pr(\cS) + (L^{-1} + 3\eps L n^{-1}) \E[|\bp{n,p}|\mid \cD],
\end{eqnarray*}
by Markov's inequality.
Since the final expectation above is $\sim \eps^{-1}$ and
our assumptions give that both $L^{-1}$ and $3\eps Ln^{-1}$ are $o(\eps^2)$, we see that
$ \Pr(|C_w|\ge L\mid \cA) \le (2+o(1))\eps$. Hence, recalling that
there are at most $L$ vertices in $C_v'$,
\[
 \E[N\mid \cA] \le L+(n-L)\Pr(|C_w|\ge L\mid \cA) \le L+(2+o(1))\eps n \sim 2\eps n.
\]
Combined with \eqref{eN2} this gives \eqref{aim}.
\end{proof}

To complete the proof of our main result,
it remains only to show that almost all vertices in large components are in a single
giant component. For this we use a simple form of the classical sprinkling argument
of Erd\H os and R\'enyi~\cite{ER_evol}.

\begin{proof}[Proof of Theorem~\ref{th1}]
It will be convenient to write $\eps=\omega n^{-1/3}$, with $\omega=\omega(n)\to\infty$ and
$\omega=o(n^{1/3})$. Also, let $\omega'\to\infty$ \emph{slowly},
say with $\omega' =o(\log \log \omega)$.

Set $L=\eps n/\omega'$.
By Lemma~\ref{llarge} there are in total at most $(2+\op(1))\eps n$ vertices
in components of size larger than $L$, which gives the upper bound on $L_1$.

For the lower bound, set $p_1=n^{-4/3}$, and define $p_0$ by $p_0+p_1-p_0p_1=p$, so that if first we choose
the edges with probability $p_0$ and then (we sprinkle some more) with probability $p_1$ then the random
graph we get is exactly $G_{n,p}$. Since $np_0-1=(1+o(1))\eps$, for any $\delta>0$
Lemma~\ref{llarge} shows that
with probability $1-o(1)$ the graph $G_{n,p_0}$ has at least $(2-\delta)\eps n$ vertices
in components of size at least $L$.

Let $U_1, \dots , U_{\ell}$ be the vertex sets of the components of $G_{n,p_0}$ of size at least $L$.
The probability that no edge sprinkled with probability $p_1$ joins $U_1$ to $U_j$ is
\[
(1-p_1)^{|U_1| |U_j|} \le e^{-p_1L^2} = \exp \big(-n^{-4/3} \omega^2 n^{4/3}/(\omega')^2\big),
\]
so the expected number of vertices of $U$ not contained in the component of $G_{n,p}$ containing
$U_1$ is at most
\[
 \sum_{j=2}^{\ell} \exp\big(-(\omega/\omega')^2\big) |U_j|=o(|U|).
\]
Consequently, with probability $1-o(1)$ all but at most $\delta|U|$ vertices of $U$ are contained
within a single component of $G_{n,p}$, in which case $L_1(G_{n,p})\ge (1-\delta)(2-\delta)\eps n$.
Since $\delta>0$ was arbitrary, it follows that $L_1(G_{n,p})\ge (2-\op(1))\eps n$,
completing the proof.
\end{proof}

To conclude,
let us remark that although Theorem~\ref{th1} is a key result about the phase
transition, as discussed in the introduction it is far from the final word
on the topic.

\end{document}